% 86-6-3
% Lower-case    a b c d e f g h i j k l m n o p q r s t u v w x y
% Digits        0 1 2 3 4 5 6 7 8 9
% Exclamation   !           Double quote "          Hash (number) #
% Dollar        $           Percent      %          Ampersand     &
% Acute accent  '           Left paren   (          Right paren   )
% Asterisk      *           Plus         +          Comma         ,
% Minus         -           Point        .          Solidus       /
% Colon         :           Semicolon    ;          Less than     <
% Equals        =           Greater than >          Question mark ?
% At            @           Left bracket [          Backslash     \
% Right bracket ]           Circumflex   ^          Underscore    _
% Grave accent  `           Left brace   {          Vertical bar  |
% Right brace   }           Tilde        ~
%%homnote.tex (82/3/8), Uses EM-TEX/LATEX209 or latest + line emulation%%
%%\documentstyle[11pt]{article}
%%- Set up for A4 paper %%
\documentclass[11pt]{article}
\usepackage{amssymb}
\oddsidemargin   = 0 cm \evensidemargin  = 0 cm \textwidth  = 14
cm \textheight = 22 cm \headheight=0cm \topskip=0cm \topmargin=0cm
%%%%%%%%%%%%%%%%%%%%%%%%%%%%%%%%%%%%%%%%%%%%%%%%%%%%%%%%%%%%%%%%%%%%%
\newtheorem{precor}{{\bf Corollary}}

\newtheorem{precon}{{\bf Conjecture}}

\newenvironment{con}{\begin{precon}{\hspace{-0.5
               em}{\bf.\ }}}{\end{precon}}
\newtheorem{predefin}{{\bf Definition}}

\newenvironment{defin}[1]{\begin{predefin}{\hspace{-0.5
                   em}{\bf.\ }}{\rm #1}\hfill{$\spadesuit$}}{\end{predefin}}
\newtheorem{preexm}{{\bf Example}}

\newtheorem{preappl}{{\bf Application}}

\newtheorem{prelem}{{\bf Lemma}}

\newtheorem{preproof}{{\bf Proof.\ }}

\newenvironment{proof}[1]{\begin{preproof}{\rm
               #1}\hfill{$\blacksquare$}}{\end{preproof}}
\newtheorem{prethm}{{\bf Theorem}}

\newenvironment{thm}{\begin{prethm}{\hspace{-0.5
               em}{\bf.\ }}}{\end{prethm}}
\newtheorem{prealphthm}{{\bf Theorem}}

\newenvironment{alphthm}{\begin{prealphthm}{\hspace{-0.5
               em}{\bf.\ }}}{\end{prealphthm}}
%%%%%%%%%%%%%%%%%%%%
%
\newtheorem{prealphlem}{{\bf Lemma}}

\newenvironment{alphlem}{\begin{prealphlem}{\hspace{-0.5
               em}{\bf.\ }}}{\end{prealphlem}}
\newtheorem{prepro}{{\bf Proposition}}

\newtheorem{prequ}{{\bf Question}}

\newenvironment{qu}{\begin{prequ}{\hspace{-0.5
               em}{\bf.\ }}}{\end{prequ}}
%%%%%%%%%%%%%%%%%%%%%%%%%%
\newtheorem{preprb}{{\bf Problem}}

\newenvironment{prb}{\begin{preprb}{\hspace{-0.5
               em}{\bf.\ }}}{\end{preprb}}
%%%%%%%%%%%%%%%%%%%%%%%%%%%%%%%%%%%%%%%%%%%%%%%%%%
\def\conct[#1,#2]{\mbox {${#1} \leftrightarrow {#2}$}}
\def\dconct[#1,#2]{\mbox {${#1} \rightarrow {#2}$}}
\def\deg[#1,#2]{\mbox {$d_{_{#1}}(#2)$}}
\def\mindeg[#1]{\mbox {$\delta_{_{#1}}$}}
\def\maxdeg[#1]{\mbox {$\Delta_{_{#1}}$}}
\def\outdeg[#1,#2]{\mbox {$d_{_{#1}}^{^+}(#2)$}}
\def\minoutdeg[#1]{\mbox {$\delta_{_{#1}}^{^+}$}}
\def\maxoutdeg[#1]{\mbox {$\Delta_{_{#1}}^{^+}$}}
\def\indeg[#1,#2]{\mbox {$d_{_{#1}}^{^-}(#2)$}}
\def\minindeg[#1]{\mbox {$\delta_{_{#1}}^{^-}$}}
\def\maxindeg[#1]{\mbox {$\Delta_{_{#1}}^{^-}$}}
\def\isdef{\mbox {$\ \stackrel{\rm def}{=} \ $}}
\def\dre[#1,#2,#3]{\mbox {${\cal E}_{_{#3}}(#1,#2)$}}
\def\var[#1,#2]{\mbox {${\rm Var}_{_{#1}}(#2)$}}
\def\ls[#1]{\mbox {$\xi^{^{#1}}$}}
\def\hom[#1,#2]{\mbox {${\rm Hom}({#1},{#2})$}}
\def\onvhom[#1,#2]{\mbox {${\rm Hom^{v}}(#1,#2)$}}
\def\onehom[#1,#2]{\mbox {${\rm Hom^{e}}(#1,#2)$}}
\def\core[#1]{\mbox {$#1^{^{\bullet}}$}}
\def\cay[#1,#2]{\mbox {${\rm Cay}({#1},{#2})$}}
\def\cays[#1,#2]{\mbox {${\rm Cay_{s}}({#1},{#2})$}}
\def\dirc[#1]{\mbox {$\stackrel{\rightarrow}{C}_{_{#1}}$}}
\def\cycl[#1]{\mbox {${\bf Z}_{_{#1}}$}}

%%%%%%%%%%%%%%%%%%%%%%%%%%%%%%%%%%%%%%%%%%%%%%%%%%%%%%%%%%%%%%%%%%%%%

\date{}
\begin{document}
%\setcounter{page}{183}
%{\footnotesize ?}\\
%\maketitle
\footnotetext[1]{This research was partially supported by Shahid
Beheshti University.}
%%%%%%%%%%%%%%%
\footnotetext[2]{Correspondence should be addressed to {\tt
hhaji@sbu.ac.ir}.}
\begin{center}
{\Large \bf  On Colorings of Graph Powers}\\
\vspace*{0.5cm}
{\bf Hossein Hajiabolhassan}\\
{\it Department of Mathematical Sciences}\\
{\it Shahid Beheshti University}\\
{\it P.O. Box {\rm 19834}, Tehran, Iran}\\
{\tt hhaji@sbu.ac.ir}\\
\end{center}
\begin{abstract}
\noindent In this paper, some results concerning the colorings of
graph powers are presented. The notion of helical graphs is
introduced. We show that such graphs are hom-universal with
respect to high odd-girth graphs whose $(2t+1)$st power is
bounded by a Kneser graph. Also, we consider the problem of
existence of homomorphism to odd cycles. We prove that such
homomorphism to a $(2k+1)$-cycle exists if and only if the
chromatic number of the $(2k+1)$st power of $S_2(G)$ is less than
or equal to 3, where $S_2(G)$ is the 2-subdivision of $G$. We
also consider Ne\v set\v ril's Pentagon problem. This problem is
about the existence of high girth cubic graphs which are not
homomorphic to the cycle of size five. Several problems which are
closely related to Ne\v set\v
ril's problem are introduced and their relations are presented.\\

\noindent {\bf Keywords:}\ { graph homomorphism, graph coloring, circular coloring.}\\
{\bf Subject classification: 05C}
\end{abstract}
%%%%%%%%%%%%%%%%%%%%%%%%%%%%%%%%%%%%%%%%%%%%%%%%%%%%%%%%%%%%%%%%%%%%%%%%%%%%%%%%%%%%
%%%%%%%%%%%%%%%%%%%%%%%%%%%%%%%%%%%%%%%%%%%%%%%%%%%%%%%%%%%%%%%%%%%%%%%%%%%%%%%%%%%%
%%%%%%%%%%%%%%%%%%%%%%%%%%%%%%%%%%%%%%%%%%%%%%%%%%%%%%%%%%%%%%%%%%%%%%%%%%%%%%%%%%%%
\section{Introduction}
Throughout this paper we only consider finite graphs. A {\it
homomorphism} $f: G \longrightarrow H$ from a graph $G$ to a
graph $H$ is a map $f: V(G) \longrightarrow V(H)$ such that $uv
\in E(G)$ implies $f(u)f(v) \in E(H)$. The existence of a
homomorphism is indicated by the symbol $G \longrightarrow H$.
Two graphs $G$ and $H$ are homomorphically equivalent if  $G
\longrightarrow H$ and $H \longrightarrow G$. Also, the symbol
$\hom[G,H]$ is used to denote the set of all homomorphisms from
$G$ to $H$ (for more on graph homomorphisms
see \cite{DAHA1, DAHA2, HT, HN}).\\
%%%%%%%%%
 If $n$ and $d$ are positive integers with  $n \geq
2d$, then the {\em circular complete graph} $K_{_{(n,d)}}$ is the
graph with vertex set $\{v_{_{0}}, v_{_{1}}, \ldots, v_{_{n-1}}\}$
in which $v_{_{i}}$ is connected to $v_{_{j}}$ if and only if $d
\leq |i-j| \leq n-d$. A graph $G$ is said to be $(n,
d)$-colorable if $G$ admits a homomorphism to $K_{_{(n,d)}}$. The
{\em circular chromatic number} (also known as the {\it star
chromatic number} \cite{VINCE}) $\chi_{_{c}}(G)$ of a graph $G$
is the minimum of those ratios $\frac{n}{d}$ for which
$gcd(n,d)=1$ and such that $G$ admits a homomorphism to
$K_{_{(n,d)}}$. It can be shown that one may only consider
onto-vertex homomorphisms \cite{ZH}. We denote by $[m]$ the set
$\{1, 2, \ldots, m\}$, and denote by ${[m] \choose n}$ the
collection of all $n$-subsets of $[m]$. For a given subset
$A\subseteq [m]$, the complement of $A$ in $[m]$ is denoted by
$\overline{A}$. The {\em Kneser graph} $KG(m,n)$ is the graph with
vertex set ${[m] \choose n}$, in which $A$ is connected to $B$ if
and only if $A \cap B = \emptyset$. It was conjectured by Kneser
\cite{kne} in 1955, and proved by Lov\'{a}sz \cite{lov} in 1978,
that $\chi(KG(m,n))=m-2n+2$. A subset $S$ of $[m]$ is called
$2$-{\em stable} if  $2 \le |x-y| \le m-2$ for all distinct
elements $x$ and $y$ of $S$.  The {\em Schrijver graph} $SG(m,n)$
is the subgraph of $KG(m,n)$ induced by all $2$-stable
$n$-subsets of $[m]$. It was proved by Schrijver \cite{sch} that
$\chi(SG(m,n))=\chi(KG(m,n))$ and that every proper subgraph of
$SG(m,n)$ has a chromatic number smaller than that of $SG(m,n)$.
The {\it fractional chromatic number}, $\chi_{_{f}}(G)$, of a
graph $G$ is defined as
$$\chi_{_{f}}(G) \isdef \inf \{\frac{m}{n} \ \ | \ \ \hom[G,KG(m,n)]
\not = \emptyset \}.$$

For more about the fractional coloring see \cite{SCUL}. The {\it
local chromatic number} of a graph was defined in \cite{ERFU} as
the minimum number of colors that must appear within distance $1$
of a vertex. Here is the formal definition.

\begin{defin}{
The local chromatic number  $\psi(G)$ of a graph $G$ is
$$\psi(G)\isdef {\displaystyle \min_c}{\displaystyle \max_{v\in V(G)}}|\{c(v): u\in N(v)\}|+1,$$
where the minimum is taken over all proper colorings $c$ of $G$
and $N(v)=N_G(v)$ denotes the neighborhood of a vertex $v$ in a
graph $G$.}
\end{defin}

It is easy to verify that for any graph $G$, $\psi(G) \leq
\chi(G)$. Also, it was shown in \cite{KOPISI} that $\chi_f(G)\leq
\psi(G)$ holds for any graph $G$.

For a graph $G$, let $G^{^{(k)}}$ be the $k$th power of $G$, which
is obtained on the vertex set $V(G)$, by connecting any two
vertices $u$ and $v$ for which there exists a walk of length $k$
between $u$ and $v$ in $G$. Note that the $k$th power of a simple
graph is not necessarily a simple graph itself. For instance, the
$k$th power may have loops on its vertices provided that $k$ is
an even integer. For a given graph $G$ with $og(G)\geq 7$, the
chromatic number of $G^{(5)}$ provides an upper bound for the
local chromatic number of $G$. In \cite{SITA}, it was proved
$\psi(G) \leq \lfloor {m\over 2}\rfloor +2$ whenever
$\chi(G^{(5)})\leq m$.

The following simple and useful lemma was proved and used
independently in \cite{DAHA4, NEME1, TA}.

\begin{alphlem}
\label{M2} Let $G$ and $H$ be two simple graphs such that
$\hom[G,H] \not = \emptyset$. Then for any positive integer $k$,
$\hom[G^{^{(k)}},H^{^{(k)}}] \not = \emptyset$.
\end{alphlem}

Note that Lemma \ref{M2} trivially holds whenever $H^{^{(k)}}$
contains a loop, e.g., when $k=2$. As immediate consequences of
Lemma~\ref{M2}, we obtain $\chi_c(P)=\chi(P)$ and
$\hom[C,C_{_{7}}]=\emptyset$, where $P$ and $C$ are the Petersen
and the Coxeter graphs, respectively, see  \cite{DAHA4}.

In what follows we are concerned with some results concerning the
colorings of graph powers. First, The notion of helical graphs is
introduced. We show that such graphs are hom-universal with
respect to high odd-girth graphs whose $(2t+1)$st power is
bounded by a Kneser graph. Then, we consider the problem of
existence of homomorphism to odd cycles. We prove that such
homomorphism to a $(2k+1)$-cycle exists if and only if the
chromatic number of the $(2k+1)$st power of $S_2(G)$ is less than
or equal to 3, where $S_2(G)$ is the 2-subdivision of $G$. We
also consider Ne\v set\v ril's Pentagon problem. This problem is
about the existence of high girth cubic graphs which are not
homomorphic to the cycle of size five. Several problems which are
closely related to Ne\v set\v ril's problem are introduced and
their relations are presented.
%%%%%%%%%%%%%%%%%%%%%%%%%%%%%%%%%%%%%%%%%%%%%%%%%%%%%%%%%%%%%%%%%%%%%%%%%%%%%%%%%%%
%%%%%%%%%%%%%%%%%%%%%%%%%%%%%%%%%%%%%%%%%%%%%%%%%%%%%%%%%%%%%%%%%%%%%%%%%%%%%%%%%%%
\section{Helical Graphs}
%%%%%%%%%%%%%%%%%%%%%%%%%%%%%%%%%%%%%%%%%%%%%%%%%%%%%%%%%%%%%%%%%%%%%%%%%%%%%%%%%%%
%%%%%%%%%%%%%%%%%%%%%%%%%%%%%%%%%%%%%%%%%%%%%%%%%%%%%%%%%%%%%%%%%%%%%%%%%%%%%%%%%%%
For a given class ${\cal C}$ of graphs, a graph $U$ is called
hom-universal with respect to ${\cal C}$ if for any $G\in {\cal
C}$, $\hom[G,U] \not = \emptyset$, in which case the class ${\cal
C}$ is said to be bounded by the graph $U$. The problem of the
existence of a bound with some special properties, for a given
class of graphs, has been a subject of study in graph
homomorphism. In the following definition, we introduce a new
family of hom-universal graphs, namely the family $H(m,n,k)$ of
the helical graphs.

\begin{defin}{Let $m, n,$ and $k$ be positive integers with $m\geq 2n$.
Set $H(m,n,k)$  to be the {\it helical graph} whose vertex set
contains all $k$-tuples $(A_1,\ldots ,A_k)$ such that for any
$1\leq r\leq k$, $A_r\subseteq [m], |A_1|=n, |A_r|\geq n$ and for
any $s\leq k-1$ and $t \leq k-2$, $A_s\cap A_{s+1}=\emptyset,
A_t\subseteq A_{t+2}$.
%%%%%%%%%%%%%%%%%%%%%
Also, two vertices $(A_1,\ldots ,A_k)$ and $(B_1,\ldots ,B_k)$ of
$H(m,n,k)$ are adjacent if for any $1\leq i, j+1\leq k$, $A_i
\cap B_i=\emptyset, A_j \subseteq B_{j+1}$, and $B_j \subseteq
A_{j+1}$.
%%%%%%
}
\end{defin}

 Note that $H(m,1,1)$ is the complete graph $K_m$ and $H(m,n,1)$
is the Kneser graph $KG(m,n)$. It is easy to verify that if $m >
2n$, then the odd-girth of $H(m,n,k)$ is greater than or equal to
$2k+1$.

For a given graph $G$ and $v\in V(G)$, set
$$N_i(v)\isdef \{u|{\rm there\ is\ a\ walk\ of\ length}\ i\ {\rm joining}\
u\ {\rm and}\ v\}.$$ Also, for a coloring $c: V(G)\longrightarrow
{[m]\choose n}$, define
$$c(N_i(v))\isdef {\displaystyle \bigcup_{u \in N_i(v)}c(u)}.$$

The chromatic number of graph powers has been studied in the
literature (see \cite{BAST, DAHA4, GYJE, RN, SITA, TA}). In the
theorem below, we show that the helical graphs are hom-universal
graphs with respect to the family of high odd-girth graphs whose
$(2k-1)$st power is bounded by a Kneser graph.

\begin{thm}\label{homb} Let $G$ be a non-empty graph with
odd-girth at least $2k+1$. Then we have
$\hom[G^{(2k-1)},KG(m,n)]\not =\emptyset$ if and only if
$\hom[G,H(m,n,k)] \not = \emptyset.$
\end{thm}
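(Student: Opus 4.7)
The plan is to prove the iff by two explicit constructions, each extracting the appropriate combinatorial feature.

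$(\Leftarrow)$: Given $f\colon G\to H(m,n,k)$ with $f(v)=(A_1(v),\ldots,A_k(v))$, I take the first-coordinate projection $g(v)=A_1(v)\in V(KG(m,n))$. To check this is a homomorphism $G^{(2k-1)}\to KG(m,n)$, fix a walk $v_0 v_1\cdots v_{2k-1}$ in $G$ and write $f(v_i)=(A_1^i,\ldots,A_k^i)$. The adjacency relation $A_r\subseteq B_{r+1}$ in $H(m,n,k)$, applied to the first $k-1$ edges of the walk, gives
\[
A_1^0\subseteq A_2^1\subseteq A_3^2\subseteq\cdots\subseteq A_k^{k-1},
\]
and the symmetric inclusion $B_r\subseteq A_{r+1}$, applied to the last $k-1$ edges read from right to left, gives
\[
A_1^{2k-1}\subseteq A_2^{2k-2}\subseteq\cdots\subseteq A_k^k.
\]
The middle edge $v_{k-1}v_k$ forces $A_k^{k-1}\cap A_k^k=\emptyset$, so $A_1^0\cap A_1^{2k-1}=\emptyset$ and $g(v_0),g(v_{2k-1})$ are adjacent in $KG(m,n)$.

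$(\Rightarrow)$: Given $f\colon G^{(2k-1)}\to KG(m,n)$, write $f(S)=\bigcup_{u\in S}f(u)$ and define
\[
g(v)=\bigl(f(N_0(v)),\,f(N_1(v)),\,\ldots,\,f(N_{k-1}(v))\bigr).
\]
Assume $G$ has no isolated vertices (any isolated vertex can be mapped to any fixed vertex of $H(m,n,k)$). Every verification is an application of a single observation: if $x,y\in V(G)$ are joined by a walk of odd length $\ell\le 2k-1$, then padding by back-and-forth steps at a non-isolated endpoint extends it to a walk of length exactly $2k-1$, whence $f(x)\cap f(y)=\emptyset$. The vertex conditions $A_s(v)\cap A_{s+1}(v)=\emptyset$ follow by applying this with $u_1\in N_{s-1}(v),\,u_2\in N_s(v)$ via walks through $v$ of total length $2s-1$; the nesting $A_t(v)\subseteq A_{t+2}(v)$ follows from $N_{t-1}(v)\subseteq N_{t+1}(v)$ by the same padding. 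For the edge conditions, when $uv\in E(G)$, applying the observation to $u_1\in N_{i-1}(u),\,v_1\in N_{i-1}(v)$ via walks through the edge $uv$ of length $2i-1$ yields $A_i(u)\cap A_i(v)=\emptyset$, and $A_j(u)\subseteq A_{j+1}(v)$ follows from $N_{j-1}(u)\subseteq N_j(v)$, obtained by prepending the edge $vu$ to any walk of length $j-1$ from $u$.

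The main obstacle is controlling the degenerate case $x=y$ in the observation above: an equality such as $u_1=v_1$ would produce a closed walk of odd length at most $2k-1$ in $G$, and hence an odd cycle of length at most $2k-1$, contradicting the hypothesis that the odd-girth of $G$ is at least $2k+1$. Thus the odd-girth assumption is precisely what is needed, both to guarantee that $G^{(2k-1)}$ is loopless on the relevant vertex pairs and to keep all the disjointness conditions in $H(m,n,k)$ intact; the rest of the argument is bookkeeping around the length-$2k{-}1$ walks that link $H(m,n,k)$ and $KG(m,n)$.
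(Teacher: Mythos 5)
Your proposal is correct and follows essentially the same route as the paper: the first-coordinate projection for one direction, and the tuple of colors of the sets $N_0(v),\ldots,N_{k-1}(v)$ for the other, with the odd-girth hypothesis ruling out the degenerate coincidences. The only cosmetic difference is that in the projection direction you chase inclusions from both ends of a length-$(2k-1)$ walk to the middle edge, while the paper jumps to the midpoint and uses $A_1\subseteq A'_{t+1}$, $B_1\subseteq B'_{t+1}$ — the same argument.
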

\begin{proof}{First, let $c\in \hom[G^{(2k-1)},KG(m,n)]$. If $v$ is an isolated
vertex of $G$, then consider an arbitrary vertex, say $f(v)$, of
$H(m,n,k)$. For any non-isolated vertex $v\in V(G)$, define
$$f(v)\isdef (c(v), c(N_1(v)), c(N_2(v)), \ldots ,c(N_{k-1}(v))).$$
If $i\leq j$ and $i\equiv j\ mod\ 2$, we have $N_i(v)\subseteq
N_j(v)$, implying that $c(N_i(v))\subseteq c(N_j(v))$. Also, since
$c$ is a homomorphism from  $G^{(2k-1)}$ to $KG(m,n)$, for any
$i\leq j \leq k-1$ and $i\not \equiv j\ mod\ 2$, we obtain
$c(N_i(v))\cap c(N_j(v))=\emptyset$. Hence, for any vertex $v\in
V(G)$, $f(v) \in V(H(m,n,k))$. Moreover, for any $0\leq i, j+1
\leq k-1$, we have $N_i(v) \cap N_i(u)=\emptyset, N_j(v) \subseteq
N_{j+1}(u)$, and $N_j(u) \subseteq N_{j+1}(v)$ provided that  $u$
is adjacent to $v$. Hence, $f$ is a graph homomorphism from $G$
to $H(m,n,k)$.

Next, let $\hom[G,H(m,n,k)] \not = \emptyset$ and $f: G
\longrightarrow H(m,n,k)$. Assume $v\in V(G)$ and
$f(v)=(A_1,A_2,\ldots ,A_k)$. Define, $c(v)\isdef A_1$. Assume
further that $u,v \in V(G)$ such that there is a walk of length
$2t+1$ ($t\leq k-1$) between $u$ and $v$ in $G$, i.e., $uv\in
E(G^{(2k-1)})$. Consider adjacent vertices $u'$ and $v'$ such that
$u'\in N_t(u)$ and $v'\in N_t(v)$. Also, let
$f(v)=(A_1,A_2,\ldots ,A_k)$, $f(u)=(B_1,B_2,\ldots ,B_k)$,
$f(v')=(A'_1,A'_2,\ldots ,A'_k)$, and $f(u')=(B'_1,B'_2,\ldots
,B'_k)$. In view of the definition of the helical graph, we
obtain $A_1 \subseteq A'_{t+1}$ and $B_1 \subseteq B'_{t+1}$. On
the other hand, $A'_{t+1}\cap B'_{t+1}=\emptyset$, which yields
$c(v)\cap c(u) =\emptyset$. Thus, $\hom[G^{(2k-1)},KG(m,n)]\not
=\emptyset$, as desired.
%%%%%%%
}
\end{proof}

It was conjectured in {\rm \cite{MANANE}} that a class $\cal{C}$
of graphs is bounded by a graph $H$ whose odd-girth is at least
$2k + 1$ provided that the set $\{\chi(G^{(2k-1)})|G \in
\cal{C}\}$ is bounded and that all graphs in $\cal{C}$ have
odd-girth at least $2k +1$. It is worth noting that Theorem
\ref{homb} shows the above conjecture is true. This conjecture
however was proved by Tardif recently (personal communication,
see \cite{MANANE}).

%%%%%%%%%%%%%%%%%%%%%%%%%%%%%%%%%%%%%%%%%%%%%%%%%%%%%%%%%%%%%%%%%%%%%%%%%%%%%%%%%%%
It was proved by Schrijver \cite{sch} that $SG(m,n)$ is the
vertex-critical subgraph of $KG(m,n)$. Motivated by the
construction of the Schrijver graphs, we introduce a family of
subgraphs of the helical graphs.
%%%%%%%%%%%%%%%%%%%%%%%%%%%%%%%%%%%%%%%%%%%%%%%%%%%%%%%%%%%%%%%%%%%%%%%%%%%%%%%%%%%
\begin{defin}{Let $m, n,$ and $k$ be positive integers with $m\geq 2n$.
Define $SG(m,n,k)$ to be the induced subgraph of $H(m,n,k)$ whose
vertex set contains all $k$-tuples $(A_1,\ldots ,A_k)\in
V(H(m,n,k))$ such that for any $1\leq r\leq k$,
$A_r={\displaystyle \cup_s} B_s$, where every $B_s$ is a
$2$-stable $n$-subset of $[m]$.
%%%%%%%%%%%%%%%%%%%%%
}
\end{defin}

One can deduce the following theorem whose proof is almost
identical to that of Theorem \ref{homb} and the proof is omitted
for the sake of brevity.

\begin{thm}\label{shomb} Let $G$ be a non-empty graph with
odd-girth at least $2k+1$. Then, $\hom[G^{(2k-1)},SG(m,n)]\not
=\emptyset$ if and only if $\hom[G,SG(m,n,k)] \not = \emptyset.$
\end{thm}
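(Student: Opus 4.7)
The plan is to imitate the proof of Theorem \ref{homb} line by line, tracking the additional $2$-stability condition that distinguishes the Schrijver-type objects $SG(m,n)$ and $SG(m,n,k)$ from their Kneser-type and helical counterparts. In both directions the combinatorial skeleton of the argument is already in place; what remains is a short verification that $2$-stability is preserved by the two constructions used.

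For the forward direction, suppose $c\in \hom[G^{(2k-1)},SG(m,n)]$ and, for each non-isolated vertex $v\in V(G)$, set
$$f(v)\isdef (c(v), c(N_1(v)), c(N_2(v)), \ldots ,c(N_{k-1}(v))),$$
assigning an arbitrary vertex of $SG(m,n,k)$ to each isolated vertex. The containments $c(N_i(v))\subseteq c(N_j(v))$ for $i\leq j$ with $i\equiv j\ mod\ 2$ and the disjointness $c(N_i(v))\cap c(N_j(v))=\emptyset$ for $i\not\equiv j\ mod\ 2$, $i,j\leq k-1$, are established exactly as in Theorem \ref{homb}, using the odd-girth hypothesis and the fact that $c$ is in particular a homomorphism into $KG(m,n)\supseteq SG(m,n)$. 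The only new observation is that each coordinate $c(N_i(v))=\bigcup_{u\in N_i(v)}c(u)$ is a union of $2$-stable $n$-subsets of $[m]$, which is automatic since every individual $c(u)$ is by definition a $2$-stable $n$-subset. Hence $f(v)\in V(SG(m,n,k))$, and the verification that $f$ preserves edges is word-for-word the same as in Theorem \ref{homb}.

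For the reverse direction, given $f:G\longrightarrow SG(m,n,k)$ with $f(v)=(A_1,A_2,\ldots ,A_k)$, set $c(v)\isdef A_1$. The extra step required here is to check that $c(v)\in V(SG(m,n))$, i.e., that $A_1$ is itself a $2$-stable $n$-subset of $[m]$; but by the definition of $SG(m,n,k)$, $A_1$ is a union of $2$-stable $n$-subsets of $[m]$, and since $|A_1|=n$ this union must coincide with a single such subset, so $A_1$ is $2$-stable. The disjointness $c(u)\cap c(v)=\emptyset$ whenever $uv\in E(G^{(2k-1)})$ is then derived exactly as in the final paragraph of the proof of Theorem \ref{homb}, by walking out to a pair of adjacent vertices $u'\in N_t(u)$, $v'\in N_t(v)$ and applying $A_1\subseteq A'_{t+1}$, $B_1\subseteq B'_{t+1}$, and $A'_{t+1}\cap B'_{t+1}=\emptyset$. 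No genuine obstacle arises at any step; this is presumably why the proof is elided in the text.
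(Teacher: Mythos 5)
Your proof is correct and is exactly the argument the paper has in mind: the paper omits the proof of this theorem, stating only that it is ``almost identical to that of Theorem \ref{homb},'' and your write-up supplies precisely the two missing verifications (that each coordinate $c(N_i(v))$ is a union of $2$-stable $n$-subsets, and that $A_1$, being a union of $2$-stable $n$-subsets of cardinality exactly $n$, is itself $2$-stable). Nothing further is needed.
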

%%%%%%%%%%%%%%%%%%%%%%%%%%%%%%%%%%%%%%%%%%%%%%%%%%%%%%%%%%%%%%%%%%%%%%%%%%%%%%%%%%%
 In \cite{GYJE}, it was proved that $\chi(H(m,1,2))=m$. Later  in \cite{BAST, SITA}, it was
shown $\chi(H(m,1,k))=m$. We would like to remark that the graph
$H(m,1,k)$ is defined in a completely different way in \cite{BAST,
SITA}. Simonyi and Tardos \cite{SITA} showed that
$\chi(H(m,1,k))=m$ by proving the existence of homomorphism from
$SG(a,b)$ to $H(m,1,k)$, where $a-2b+2=m$ and $a$ is sufficiently
large. Similarly, one can show that
$\chi(H(m,n,k))=\chi(SG(m,n,k))=m-2n+2$, where $m\geq 2n$.

\begin{alphlem}{\rm \cite{SITA}}\label{dist}
Let $u,v\subset [a]$ be two vertices of $SG(a,b)$. If there is a
walk of length $2s$ between $u$ and $v$ in $SG(a,b)$, then
$|u\setminus v | \leq s(a-2b+2)$.
\end{alphlem}

%%%%%%%%%%%%%%%%%%%%%%%%%%%%%%%%%%%%
%%%%%%%%%%%%%%%%%%%%%%%%%%%%%%%%%%%%%%%%%%%%%%%%%%%%%%%%%%%%%%%%%%%%%%%%%%%%%%%%%%%
\begin{thm}\label{chrom}
Let $m, n,$ and $k$ be positive integers with $m\geq 2n$. The
chromatic number of the helical graph $H(m,n,k)$ is equal to
$m-2n+2$. Moreover, $\chi(SG(m,n,k))=m-2n+2$.
\end{thm}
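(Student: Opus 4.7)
The two equalities follow from matching upper and lower bounds. Since $SG(m,n,k)$ is an induced subgraph of $H(m,n,k)$, it suffices to prove the upper bound $\chi(H(m,n,k)) \leq m-2n+2$ and the lower bound $\chi(SG(m,n,k)) \geq m-2n+2$.

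For the upper bound, the plan is to exhibit a graph homomorphism from $H(m,n,k)$ to $KG(m,n)$ via the projection $\pi(A_1,A_2,\ldots,A_k) = A_1$ onto the first coordinate. By the vertex definition $|A_1|=n$, so $\pi$ takes values in $V(KG(m,n))$; and the $i=1$ instance of the adjacency condition in $H(m,n,k)$ forces $A_1 \cap B_1 = \emptyset$, which is the edge relation of $KG(m,n)$. Invoking Lov\'asz's theorem $\chi(KG(m,n)) = m-2n+2$ then yields $\chi(H(m,n,k)) \leq m-2n+2$, and the same bound descends to the induced subgraph $SG(m,n,k)$.

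For the lower bound, I would follow the Simonyi--Tardos strategy. First choose integers $a, b$ with $a-2b+2 = m-2n+2$ and $a$ large enough that $og(SG(a,b)) \geq 2k+1$; this is possible because the odd-girth of $SG(a,b)$ tends to infinity as $a/b$ grows with $a-2b$ fixed. By Schrijver's theorem $\chi(SG(a,b)) = m-2n+2$, so the task reduces to constructing a homomorphism $SG(a,b) \to SG(m,n,k)$. By Theorem \ref{shomb}, this is equivalent to constructing a homomorphism $SG(a,b)^{(2k-1)} \to SG(m,n)$. To see what intersection conditions such a map must respect, I would invoke Lemma \ref{dist}: any walk of length $2k-1$ in $SG(a,b)$ from $u$ to $v$ factors as a walk of length $2(k-1)$ from $u$ to some neighbor $w$ of $v$, so $|u \setminus w| \leq (k-1)(m-2n+2)$; since $w \cap v = \emptyset$ implies $u \cap v \subseteq u \setminus w$, we obtain $|u \cap v| \leq (k-1)(m-2n+2)$ for every edge $uv$ of $SG(a,b)^{(2k-1)}$.

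The main obstacle is the final step of constructing an explicit coloring $c : V(SG(a,b)) \to V(SG(m,n))$ compatible with this intersection bound, i.e., one for which $c(u) \cap c(v) = \emptyset$ whenever $u$ and $v$ are joined by a walk of length $2k-1$ in $SG(a,b)$. This is a direct generalization of the Simonyi--Tardos construction from the case $n=1$ (where $SG(m,1,k) = H(m,1,k)$, since $2$-stability is vacuous for singletons) to arbitrary $n$, and it is here that the bulk of the combinatorial work is concentrated. Once $c$ is obtained, Theorem \ref{shomb} converts it into the required homomorphism $SG(a,b) \to SG(m,n,k)$, giving $\chi(SG(m,n,k)) \geq \chi(SG(a,b)) = m-2n+2$ and completing the proof.
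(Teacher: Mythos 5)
Your overall architecture coincides with the paper's: the upper bound via the first-coordinate projection $(A_1,\ldots,A_k)\mapsto A_1$ into $KG(m,n)$ together with Lov\'asz's theorem, and the lower bound by producing a homomorphism from $SG(a,b)$ to $SG(m,n,k)$ with $a-2b=m-2n$ and $a$ large, reduced via Theorem \ref{shomb} to a homomorphism from $SG(a,b)^{(2k-1)}$ to $SG(m,n)$. The upper-bound half is complete and correct. But in the lower-bound half you stop exactly where the proof actually lives: you derive the necessary condition $|u\cap v|\le (k-1)(m-2n+2)$ for edges of $SG(a,b)^{(2k-1)}$ and then defer ``the bulk of the combinatorial work'' to an unspecified generalization of the Simonyi--Tardos construction. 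That construction is the entire content of the paper's argument, so as written the proposal has a genuine gap; moreover your intersection bound alone is not sufficient to produce the required coloring, since one needs images that are provably disjoint, not merely sources with small overlap.

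Concretely, the paper takes $a=2(k-1)m(m-2n+2)+m$ and $b=(k-1)m(m-2n+2)+n$, and partitions $[a]$ into $m$ blocks $D_1,\ldots,D_m$ of consecutive elements with $|D_i|=2(k-1)(m-2n+2)+1$. A $2$-stable set meets each $D_i$ in at most $(k-1)(m-2n+2)+1$ elements, with equality forcing $u\cap D_i$ to equal the unique alternating subset $E_i\subseteq D_i$ of that size; since $b=\sum_i(|D_i|-1)/2+n$, a counting argument shows every $2$-stable $b$-subset $u$ contains $E_i$ for at least $n$ indices $i$, and $c(u)$ is defined to be $n$ such indices (a $2$-stable $n$-subset of $[m]$ because $u$ is $2$-stable across block boundaries). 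Disjointness is then proved not from the bound on $|u\cap v|$ but from Lemma \ref{dist} applied to the penultimate vertex of the walk: if $E_i\subseteq v$ and $v=v_0,\ldots,v_{2r-1}=u$ with $r\le k$, then $|v\setminus v_{2r-2}|\le (k-1)(m-2n+2)<|E_i|$, so $v_{2r-2}$ meets $E_i$; since $u\cap v_{2r-2}=\emptyset$, $u$ cannot contain all of $E_i$, whence $i\notin c(u)$. Your plan is faithful to the paper's strategy, but the theorem is not proved until this coloring (or an equivalent one) is exhibited and verified.
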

\begin{proof}{For a given vertex vertex $v=(A_1,A_2,\ldots ,A_k) \in V(H(m,n,k))$, define $f(v)\isdef
A_1$. It is easy to check that $f$ is a graph homomorphism from
$H(m,n,k)$ to $KG(m,n)$. It follows that $\chi(SG(m,n,k))\leq
\chi(H(m,n,k))\leq m-2n+2$. Now, we prove that $m-2n+2$ is a lower
bound for the chromatic number of $SG(m,n,k)$. To this end, it
suffices to show, first, that for $a\isdef 2(k-1)m(m-2n+2)+m$ and
$b\isdef (k-1)m(m-2n+2)+n$, we have
$\hom[SG(a,b)^{(2k-1)},SG(m,n)]\not =\emptyset$. Then, Theorem
\ref{shomb} applies, and hence the assertion follows. Now, let
$[a]$ be partitioned into $m$ sets, each of which contains
$2(k-1)(m-2n+2)+1$ consecutive elements of $[a]$. In other words,
$[a]$ is partitioned into $m$ disjoint sets $D_1,\ldots,D_m$,
where each $D_i$ contains consecutive elements and $|D_i|
=2(k-1)(m-2n+2)+1$. Note that $b=(k-1)m(m-2n+2)+n$ and
%%%%%
${\displaystyle \sum_{i=1}^m}{(|D_i|-1)\over 2}=(k-1)m(m-2n+2)$.
Therefore, for every  $2$-stable subset $u$ of $[a]$ of size $b$,
there are at least $n$ indices $i_1,\ldots ,i_n$ such that $u$
contains $(k-1)(m-2n+2)+1$ elements of $D_{i_j}, 1\leq j \leq n$.
Note also that $D_i$ contains a unique subset of cardinality
$(k-1)(m-2n+2)+1$ which does not contain any two consecutive
elements. Use $E_i$ to denote this unique subset of $D_i$, which
is readily seen to consist of the smallest elements of $D_i$, the
third smallest elements of $D_i$, and so on and so forth. For any
vertex  $u\in SG(a, b)$, we define a coloring $c$ by choosing $n$
indices $i_j$ ($1\leq j \leq n$) such that $E_{i_j}\subseteq u$
and we set $c(u)\isdef \{i_1,\ldots ,i_n\}$. Since $u$ is a
$2$-stable subset of $[a]$, it is easy to verify that $c(u)$ is a
$2$-stable subset of $[m]$ too. One needs to show that for any two
vertices $u$ and $v$ for which there is a walk of length $2r-1$
between them, where $1\leq r\leq k$, we have $c(u)\cap
c(v)=\emptyset$. To prove this, suppose that $i\in c(v)$ and
$v=v_0,v_1, \ldots ,v_{2r-1}=u$ be a walk between $u$ and $v$,
where $1\leq r\leq k$. By Lemma \ref{dist}, $|v\setminus v_{2r-2}|
\leq (k-1)(m-2n+2)$. In particular, $v_{2r-2}$ contains all but at
most $(k-1)(m-2n+2)$ elements of $E_i$. As
$|E_i|=(k-1)(m-2n+2)+1$, we see that $v_{2r-2}\cap E_i\not =
\emptyset$. Thus, the set $u$, which is disjoint from $v_{2r-2}$,
cannot contain all elements of $E_i$, showing that $i\not \in
c(u)$. This proves that $c(u) \cap c(v) =\emptyset$. Therefore,
Theorem \ref{shomb} applies, finishing the proof.
%%%%%%%%%%%%%%%%
}
\end{proof}
%%%%%%%%%%%%%%%%%%%%%%%%%%%%%%%%%%%%%%%%%%%%%%%%%%%%%%%%%%%%
%%%%%%%%%%%%%%%%%%%%%%%%%%%%%%%%%%%%%%%%%%%%%%%%%%%%%%%%%%%%
For a given graph $G$, if $u$ and $v$ are distinct vertices of $G$
and the neighborhood of $u$ is a subset of that of $v$, then the
graph $G$ is certainly not a vertex-critical graph. Note that in
the graph $SG(7,2,2)$, the neighborhood of the vertex
$(\{1,3\},\{4,5,6,7\})$ is a subset of that of the vertex
$(\{1,3\},\{2,4,5,6,7\})$. Hence, the graph $SG(m,n,k)$ in
general is not a vertex-critical graph.  This motivates us to
present the following definition.
%%%%%%%%%%%%%%%%%%%%%%%%%%%%%%%%%%%%%%%%%%%%%%%%%%%%%%%%%%%%%%%%%%%%%%%%%%%%%%%%%%%
\begin{defin}{Let $m, n,$ and $k$ be positive integers with $m\geq 2n$.
Define $SH(m,n,k)$ to be the induced subgraph of $H(m,n,k)$ whose
vertex set contains all  $k$-tuples $(A_1,\ldots ,A_k)\in
V(H(m,n,k))$ such that for any $1\leq r\leq k$,
$A_r={\displaystyle \cup_s} B_s$ and
$\overline{A_r}={\displaystyle \cup_t} C_t$, where $B_s$'s and
$C_t$'s are all $2$-stable $n$-subsets of $[m]$.
%%%%%%%%%%%%%%%%%%%%%
}
\end{defin}
%%%%%%%%%%%%%%%%%%%%%
%%%%%%%%%%%%%%%%%%%%%%%%%%%%%%%%%%%%%%%%%%%%%%%%%%%%%%%%%%%%
One can check that $SH(m,n,k)$ has the property that for any two
distinct vertices $u,v \in V(SH(m,n,k)$, $N(u) \nsubseteq N(v)$
and $N(v) \nsubseteq N(u)$. Also, it is straightforward to see
that $SH(m,n,k)$ is the maximal subgraph of $SG(m,n,k)$ with the
aforementioned property. To prove this, we modify the graph
$SG(m,n,k)$ by performing the following {\bf WHILE}-loop.

\vspace{0.3cm}
%%%%%
{\bf WHILE} there exist two distinct vertices $u=(A_1,\ldots
,A_k)$ and $v=(B_1,\ldots ,B_k)$, where $N(u) \subseteq N(v)$,
then {\bf DO} the following: remove the vertex $u$.
%%%%
\vspace{0.3cm}
%%%%%

We claim that in the {\bf WHILE-loop} algorithm when the input is
the graph $SG(m,n,k)$ with $m\geq 2n$, then the output is the
graph $SH(m,n,k)$. To show this, note that in the {\bf
WHILE}-loop each time we search in the new graph for the "bad"
vertex $u$. So a vertex $u$ may be good at the beginning, and
become bad later. Suppose that {\bf WHILE-loop} is not completed
yet. In the last graph obtained from the {\bf WHILE-loop}
algorithm, let $i$ be the greatest positive integer for which
there exists at least a vertex $u=(A_1,\ldots ,A_k)\in
V(SG(m,n,k))$ such that $\overline{A_i}$ is not a union of
$2$-stable $n$-subsets of $[m]$. Note that as $|A_1|=n$, it is
easy to verify that $\overline{A_1}$ is a union of $2$-stable
$n$-subsets of $[m]$, and hence $i \geq 2$. Also, by the
assumption, for any $i < j$, $\overline{A_j}$ is a union of
$2$-stable $n$-subsets of $[m]$. Set $v\isdef (A_1,\ldots
,A_{i-1},A_i\cup B, A_{i+1},\ldots ,A_k)$, where
$$B\isdef \{j| j\in \overline{A_i}\ {\rm and }\ j \ \ {\rm does\ not \ appear\ in\ any}\ 2{\rm -stable}\ n{\rm -subsets\
of}\ \overline{A_i}\}.$$ For any $j \in B$, since
$A_{i-1}\subseteq \overline{A_i}$ and that $A_{i-1}$ is a union
of $2$-stable $n$-subsets of $[m]$, it is easy to show that
$\{j-1,j+1\}\subseteq A_{i-1}\subseteq \overline{A_i}$ (mod m).
Therefore, $A_i\cup B$ is a union of $2$-stable $n$-subsets of
$[m]$. Also, by considering the assumption, we should have
$B\subseteq A_{i+2}$. Thus, $v\in V(SG(m,n,k))$ and also $N(u)
\subseteq N(v)$. Consequently, when the {\bf WHILE}-loop is
completed, we obtain the graph $SH(m,n,k)$. Also, this shows that
$SH(m,n,k)$ and $SG(m,n,k)$ are homomorphically equivalent. In
view of the above observation, we suggest the following question.

\begin{qu}\label{cri}
Let $m, n,$ and $k$ be positive integers with $m\geq 2n$. Is it
true that the graph $SH(m,n,k)$ is a vertex-critical graph?
\end{qu}

The problem whether the circular chromatic number and the
chromatic number of the Kneser graphs and the Schrijver graphs are
equal has received attention and has been studied in several
papers \cite{DAHA3, HAZH, jhs, LILI, ME, SITA}. Johnson, Holroyd,
and Stahl \cite{jhs} proved that $\chi_c({\rm KG}(m,n))=\chi({\rm
KG}(m,n))$ if $m\leq 2n+2$ or $n=2$. They also conjectured that
the equality holds for all Kneser graphs.

\begin{con}
\label{jhsconj} {\rm \cite{jhs}} For all $m \geq 2n+1$,
$\chi_c({\rm KG}(m,n))=\chi({\rm KG}(m,n))$.
\end{con}

It was shown in \cite{HAZH} that if $m \geq 2n^2(n-1)$, then the
circular chromatic number of ${\rm KG}(m,n)$ is equal to its
chromatic number. Later, it was proved independently in \cite{ME,
SITA} that  $\chi({\rm KG}(m,n))=\chi_c({\rm KG}(m,n))=m-2n+2$
whenever $m$ is an even natural number. Also in \cite{BAST, SITA},
it was shown that $\chi(H(m,1,k))=m$. Simonyi and Tardos
\cite{SITA} used the fact that $\hom[SG(a,b),H(m,1,k)]\not
=\emptyset$, where $a-2b+2=m$, and hence $m-1$ is a lower bound
for the co-index of the box complex of $H(m,1,k)$. For definition
of the box complex and more about this concept refer to
\cite{SITA}.

\begin{alphthm}{\rm (\cite{ME}, \cite{SITA})}\label{box}
If ${\rm coind}(B_0(G))$ is odd for a graph $G$, then
$\chi_c(G)\geq {\rm coind}(B_0(G))+1$.
\end{alphthm}

It was shown in \cite{SITA} that circular chromatic number and
chromatic number of $H(m,1,k)$ are equal.

\begin{thm}\label{cirhel}
Let $m, n,$ and $k$ be positive integers, where $m\geq 2n$ and $m$
is an even positive integer. Then, $\chi_c(SG(m,n,k))=
\chi_c(H(m,n,k))=m-2n+2$. Furthermore, $\chi_c(SH(m,n,k))=m-2n+2$.
\end{thm}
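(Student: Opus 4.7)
My plan is to mirror the Simonyi--Tardos strategy recalled just above the statement, combining the explicit homomorphism from a Schrijver graph that was already built inside the proof of Theorem \ref{chrom} with the topological lower bound of Theorem \ref{box}. The upper bound $\chi_c\leq\chi=m-2n+2$ for all three graphs is immediate from Theorem \ref{chrom} together with the universal inequality $\chi_c(G)\leq\chi(G)$.

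For the matching lower bound on $H(m,n,k)$ and $SG(m,n,k)$, I would revisit the construction inside the proof of Theorem \ref{chrom}: for $a\isdef 2(k-1)m(m-2n+2)+m$ and $b\isdef (k-1)m(m-2n+2)+n$ it produced a homomorphism $SG(a,b)\longrightarrow SG(m,n,k)$, which of course composes with the inclusion $SG(m,n,k)\hookrightarrow H(m,n,k)$. Two parameter identities drive the rest: $a-2b+2=m-2n+2$, and, because $m$ is even by hypothesis, $a$ itself is even. Standard functoriality of the box complex converts this chain into $\mathbb{Z}_2$-equivariant maps of box complexes, so
$${\rm coind}(B_0(H(m,n,k)))\ \geq\ {\rm coind}(B_0(SG(m,n,k)))\ \geq\ {\rm coind}(B_0(SG(a,b)))\ \geq\ a-2b+1\ =\ m-2n+1,$$
where the last inequality is the topological input from \cite{ME, SITA} that is valid precisely when $a$ is even. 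Since $m-2n+1$ is odd, Theorem \ref{box} then yields $\chi_c(H(m,n,k))\geq m-2n+2$ and $\chi_c(SG(m,n,k))\geq m-2n+2$, which match the upper bounds.

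For $\chi_c(SH(m,n,k))$ I would appeal to the {\bf WHILE}-loop analysis preceding Question \ref{cri}, which shows that $SH(m,n,k)$ and $SG(m,n,k)$ are homomorphically equivalent. Since $\chi_c$ is a homomorphism invariant, $\chi_c(SH(m,n,k))=\chi_c(SG(m,n,k))=m-2n+2$.

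The only substantive obstacle is the topological lower bound ${\rm coind}(B_0(SG(a,b)))\geq a-2b+1$ for even $a$, which I would treat as a black box imported from \cite{ME, SITA}; everything else is bookkeeping, and the evenness of $m$ is exactly the parity condition that both forces $a$ to be even (so the black-box bound applies) and makes the resulting coindex $m-2n+1$ odd (so Theorem \ref{box} applies).
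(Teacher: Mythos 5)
Your proposal is correct and follows essentially the same route as the paper: the homomorphism $SG(a,b)\longrightarrow SG(m,n,k)$ extracted from the proof of Theorem \ref{chrom}, the resulting inequality of coindices of box complexes, the known value ${\rm coind}(B_0(SG(a,b)))=a-2b+1$, Theorem \ref{box} (applicable since $m$ even makes $m-2n+1$ odd), and the homomorphic equivalence of $SH(m,n,k)$ with $SG(m,n,k)$ for the last claim. The only cosmetic slip is attributing the parity hypothesis to the coindex computation for Schrijver graphs (which holds for all $a$); the evenness of $a$ is needed only so that Theorem \ref{box} applies.
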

\begin{proof}
{As proved in Theorem \ref{chrom}, if $a-2b=m-2n$ and
$a=2(k-1)m(m-2n+2)+m$, then $\hom[SG(a,b),SG(m,n,k)]\not
=\emptyset$. This implies ${\rm coind}(B_o(SG(a,b))\leq {\rm
coind}(B_o(SH(m,n,k))$. Also, it is well known that ${\rm
coind}(B_o(SG(a,b))=a-2b+1$. Thus, by Theorem \ref{box}, we have
$\chi_c(SG(m,n,k))=\chi_c(H(m,n,k))=m-2n+2$. Also, two graphs
$SH(m,n,k)$ and $SG(m,n,k)$ are homomorphically equivalent. Thus,
$\chi_c(SH(m,n,k))=m-2n+2$.}
\end{proof}

In \cite{ME, SITA}, the authors made use of Theorem \ref{box} to
prove that $\chi_c((SG(a,b))=\chi((SG(a,b))$ provided that $a$ is
an even positive integer. In view of
$\chi_c((SG(a,b))=\chi((SG(a,b))$, where $a$ is an even integer
number, one can present an alternate proof of Theorem
\ref{cirhel}. However, note that the equality ${\rm
coind}(B_o(H(m,n,k))=m-2n+1$ provides more information about the
colorings of the helical graph $H(m,n,k)$ (see \cite{SITA,
SITA2}).

It was conjectured in \cite{LILI} and proved in \cite{HAZH}, that
for every fixed $n$, there is a threshold $t(n)$ such that
$\chi_c(SG(m,n))=\chi(SG(m,n))$ for all  $m \geq t(n)$. Note that
$H(3,1,2)$ is the nine cycle and that $\chi_c(H(3,1,2))={9 \over
4}$. Hence, the following question arises naturally.

\begin{qu}\label{PR0}
Given positive integers $n$ and $k$, does there exist a number
$t(n,k)$ such that the equality
$\chi_c(SH(m,n,k))=\chi_c(H(m,n,k))=\chi(H(m,n,k))=m-2n+2$ holds
for all $m \geq t(n,k)$?
\end{qu}
%%%%%%%%%%%%%%%%%%%%%%%%%%%%%%%%%%%%%%%%%%%%%%%%%%%%%%%%%%%%%%%%%%%%%%%%%%%%%%%%%%%
%%%%%%%%%%%%%%%%%%%%%%%%%%%%%%%%%%%%%%%%%%%%%%%%%%%%%%%%%%%%%%%%%%%%%%%%%%%%%%%%%%%
\section{Homomorphism to Odd Cycles}
%%%%%%%%%%%%%%%%%%%%%%%%%%%%%%%%%%%%%%%%%%%%%%%%%%%%%%%%%%%%%%%%%%%%%%%%%%%%%%%%%%%
%%%%%%%%%%%%%%%%%%%%%%%%%%%%%%%%%%%%%%%%%%%%%%%%%%%%%%%%%%%%%%%%%%%%%%%%%%%%%%%%%%%
In this section, we investigate the problem of existence of
homomorphisms to odd cycles. A graph $H$ is said to be a
subdivision of a graph $G$ if $H$ is obtained from  $G$ by
subdividing some of the edges. The graph $S_t(G)$ is said to be
the $t$-subdivision of a graph $G$ if $S_t(G)$ is obtained from
$G$ by replacing each edge by a path with exactly $t$ inner
vertices. Note that $S_0(G)$ is isomorphic to $G$. In the
following theorem, we prove that a homomorphism to $(2k+1)$-cycle
exists if and only if the chromatic number of (2k+1)st power of
$S_2(G)$ is less than or equal to 3.

\begin{thm}\label{ocy} Let $G$ be a graph with odd-girth at least
$2k+1$. Then, $\chi(S_2(G)^{(2k+1)}) \leq 3$ if and only if
$\hom[G,C_{2k+1}]\not =\emptyset$.
\end{thm}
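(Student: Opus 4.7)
The plan is to prove both directions of the biconditional separately. The direction $\hom[G,C_{2k+1}] \neq \emptyset \Rightarrow \chi(S_2(G)^{(2k+1)}) \leq 3$ is routine via Lemma~\ref{M2}, while the converse is the substantive direction which I would handle through Theorem~\ref{homb}.

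For the direction $\hom[G,C_{2k+1}] \neq \emptyset \Rightarrow \chi(S_2(G)^{(2k+1)}) \leq 3$: any homomorphism $G \to C_{2k+1}$ extends canonically to a homomorphism $S_2(G) \to S_2(C_{2k+1}) = C_{6k+3}$ by mapping the subdivision path of each edge of $G$ to the subdivision path of its image. Lemma~\ref{M2} then yields $S_2(G)^{(2k+1)} \to C_{6k+3}^{(2k+1)}$, so it suffices to show $\chi(C_{6k+3}^{(2k+1)}) \leq 3$. Note that $C_{6k+3}^{(2k+1)}$ is the circulant \cay[\mathbb{Z}_{6k+3},\{\pm 1, \pm 3, \ldots, \pm(2k+1)\}]. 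I would exhibit a block $3$-coloring: writing each $i \in \mathbb{Z}_{6k+3}$ uniquely as $i = q(2k+1)+r$ with $q \in \{0,1,2\}$ and $r \in \{0,\ldots,2k\}$, set $c(i) \equiv q + \epsilon(r) \pmod{3}$, where $\epsilon(0)=0$, $\epsilon(r)=1$ for odd $r$, and $\epsilon(r)=2$ for even $r \geq 2$. Properness is verified by a short case analysis depending on whether adding an odd difference $d \in \{1, 3, \ldots, 2k+1\}$ to $i$ crosses a block boundary, combined with the parity class of $r$; within a block $\epsilon$ alternates, and crossing a block shifts the $q$-part by $1$ in $\mathbb{Z}_3$.

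For the converse: given a proper $3$-coloring $c$ of $S_2(G)^{(2k+1)}$, apply Theorem~\ref{homb} to $S_2(G)$ with parameter $k+1$ (so the power is $2(k+1)-1 = 2k+1$), using the fact that the odd-girth of $S_2(G)$ is at least $3(2k+1) \geq 2(k+1)+1$, together with $KG(3,1)=K_3$. This yields a homomorphism $f: S_2(G) \to H(3,1,k+1)$. Generalizing the paper's observation that $H(3,1,2) = C_9$, I would verify $H(3,1,k+1) \cong C_{6k+3} = S_2(C_{2k+1})$ by checking that the $3(2k+1)$ valid tuples form a $2$-regular connected cycle under the helical adjacency conditions. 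Thus $f$ may be viewed as a homomorphism $S_2(G) \to C_{6k+3}$.

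The final step is to descend $f$ to the desired $h: G \to C_{2k+1}$ by showing that $f|_{V(G)}$ lands in a single coset of the subgroup $3\mathbb{Z}_{6k+3} \cong \mathbb{Z}_{2k+1}$, after which composition with the canonical quotient gives the required homomorphism. A priori, for each edge $uv \in E(G)$ we have $f(u)-f(v) \in \{\pm 1, \pm 3\} \pmod{6k+3}$, so the main obstacle is excluding the $\pm 1$ case via the odd-girth hypothesis. The key observation is that if $G$ contains an odd cycle of length $2k+1$, the corresponding closed walk of length $6k+3$ in $S_2(G)$ has $f$-increment sum which is simultaneously a nonzero multiple of $6k+3$, odd (as a sum of $6k+3$ terms $\pm 1$), and bounded in absolute value by $6k+3$, hence equal to $\pm(6k+3)$; this forces every edge of that cycle to have $f$-difference exactly $\pm 3$. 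The most delicate point, where the odd-girth hypothesis on $G$ is leveraged globally, is propagating this rigidity from short odd cycles to all edges of $G$, possibly by adjusting $f$ on subdivision vertices so that the image of $V(G)$ is realigned into the coset $3\mathbb{Z}_{6k+3}$ consistently with the $\pm 1$-increment adjacency constraints.
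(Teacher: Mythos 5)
Your forward direction is correct, though more laborious than necessary: after extending the homomorphism to $S_2(G)\to C_{6k+3}$ you could identify $C_{6k+3}$ with $H(3,1,k+1)$ and invoke Theorem~\ref{homb} (with $m=3$, $n=1$, $KG(3,1)=K_3$) to conclude $\chi(S_2(G)^{(2k+1)})\le 3$ at once, rather than constructing and verifying an explicit $3$-coloring of the circulant $C_{6k+3}^{(2k+1)}$; that said, your block coloring does appear to check out.

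The backward direction has a genuine gap at the descent step, and it is not merely an omitted verification but a step that fails as stated. After obtaining $f:S_2(G)\to C_{6k+3}$, you aim to show that $f$ restricted to $V(G)$ lies in a single coset of $3\mathbb{Z}_{6k+3}$, i.e., that every original edge $uv$ of $G$ satisfies $f(u)-f(v)=\pm 3$. This is false for a general such $f$: the subdivision path $u,x,y,v$ may be mapped with increments $+1,+1,-1$, giving $f(v)-f(u)=1$, and nothing in the hypotheses forbids it. Your winding-number rigidity argument constrains only the edges lying on a cycle of length exactly $2k+1$; but $G$ need not contain any such cycle (the odd-girth is merely \emph{at least} $2k+1$, and $G$ could even be bipartite), and edges off such cycles are not controlled at all. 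The proposed repair --- ``adjusting $f$ on subdivision vertices'' to realign the image into one coset --- is exactly as hard as the theorem itself, since a realigned $f$ restricted to $V(G)$ and divided by $3$ \emph{is} the desired homomorphism $G\to C_{2k+1}$; leaving it as a remark leaves the proof incomplete. The paper sidesteps the issue entirely: from $f$ it passes to $\hom[S_2(G)^{(3)},C_{6k+3}^{(3)}]\not=\emptyset$ via Lemma~\ref{M2}, observes that $G$ is a subgraph of $S_2(G)^{(3)}$, and composes with a homomorphism $C_{6k+3}^{(3)}\to C_{2k+1}$ (e.g., the one induced by the repeating increment pattern $+1,+1,-1$), which sends both distance-$1$ and distance-$3$ pairs of $C_{6k+3}$ to edges of $C_{2k+1}$, so no case exclusion or realignment is ever needed.
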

\begin{proof}{
First, if there exists a homomorphism from $G$ to $C_{2k+1}$, then
it is obvious to see that there is a homomorphism from $S_2(G)$
to $C_{6k+3}=H(3,1,k+1)$. In view of Theorem \ref{homb}, we have
$\chi(S_2(G)^{(2k+1)})\leq 3$.

Next, if $\chi(S_2(G)^{(2k+1)}) \leq 3$, then $\hom[S_2(G),
C_{6k+3}]\not =\emptyset$. Consequently, $\hom[S_2(G)^{(3)},
C_{6k+3}^{(3)}]\not =\emptyset$. Also, it is easy to verify that
$G$ is a subgraph of $S_2(G)^{(3)}$ and that there is a
homomorphism from $C_{6k+3}^{(3)}$ to $C_{2k+1}$. Therefore, we
have $\hom[G,C_{2k+1}]\not =\emptyset$.}
\end{proof}

Considering Theorem \ref{ocy}, it is worth to study the following
question.

\begin{qu}\label{ChromThick}
Let $G$ be a non-bipartite graph. What is the value of
$$\sup\{{2k+1 \over 2t+1}| \chi(S_{2t}(G)^{(2k+1)})=\chi(G),
{2k+1 \over 2t+1}< og(G) \}?$$
\end{qu}

 In \cite{NES}, Ne\v set\v ril posed the Pentagon problem.
%%%%%%%%
\begin{prb}\label{PR1} {\rm Ne\v set\v ril's
Pentagon Problem
\cite{NES}}\\
If $G$ is a cubic graph of sufficiently large girth, then
$\hom[G,C_{_{5}}] \not = \emptyset$.
\end{prb}

It should be noted that if in the problem $C_{5}$ is replaced by
$C_{3}$, then the problem holds; and in fact it is a quick
consequence of Brook's theorem. On the other hand, the problem is
known to be false if one replaces $C_{5}$ by $C_{11}$, $C_{9}$ or
$C_{7}$ \cite{HA,KNS,WO}.\\

In view of Theorem \ref{ocy}, it is possible to rephrase the
Pentagon Problem as follows.

\begin{qu}
Let $G$ be a cubic graph of sufficiently large girth, is it true
that $\chi(S_2(G)^{(5)}) \leq 3$?
\end{qu}

 If the Pentagon problem holds, then it follows from
 Lemma~\ref{M2} that there exists a number
$g_{_{0}}$ with the property that the chromatic number of the
third power of any cubic graph with girth larger than $g_{_{0}}$
is less than six.

\begin{qu}{\rm \cite{DAHA4}}
\label{PR2} Is it true that for any natural number $g_{_{0}}$,
there exists a cubic graph $G$ whose girth is larger than
$g_{_{0}}$ and $\chi(G^{^{(3)}}) \geq 6$?
\end{qu}

It is interesting to find $\displaystyle \max_{g(G)\geq g}
\chi(G^{(3)})$, where maximum is taken over all cubic graphs with
girth at least $g$. It should be noted that by Brook's theorem
this maximum is less than or equal to 16. In view of Theorem
\ref{homb}, the following question is equivalent to question
\ref{PR2}.

\begin{qu}
\label{PR3} Is it true that for any natural number $g_{_{0}}$,
there exists a cubic graph $G$ whose girth is larger than
$g_{_{0}}$ and $\hom[G,H(5,1,2)]=\emptyset$?
\end{qu}

Note that $H(3,1,2)$ is the nine cycle. It was proved in \cite{WO}
that the above question has an affirmative answer when $H(5,1,2)$
is replaced by $H(3,1,2)$. This motivates us to suggest the
following question.

\begin{qu}
\label{PR4} Is it true that for any natural number $g_{_{0}}$,
there exists a cubic graph $G$ whose girth is larger than
$g_{_{0}}$ and $\hom[G,H(4,1,2)]=\emptyset$?
\end{qu}

The fractional chromatic number of graphs with odd-girth greater
than $3$ has been studied in several papers \cite{HATZH, HETH}.
Heckman and Thomas \cite{HETH} posed the following conjecture.

\begin{con}{\rm \cite{HETH}}\label{frt}
Every triangle free graph with maximum degree at most $3$ has the
fractional chromatic number at most ${14\over 5}$.
\end{con}

The helical graphs bound high girth graphs. Thus, it may be
interesting to compute their fractional chromatic number and
their local chromatic number.

\begin{qu}
Let $m, n,$ and $k$ be positive integers with $m\geq 2n$. What are
the values of $\chi_f(H(m,n,k))$ and $\psi(H(m,n,k))?$
\end{qu}

Let ${\cal P}_{2k+1}$ be the class of planar graphs of odd-girth
at least $2k +1$. Naserasr \cite{RN} posed an upper bound for the
chromatic number of planar graph powers as follows.

\begin{con}{\rm \cite{RN}}
For every $G\in {\cal P}_{2k+1}$ we have $\chi(G^{(2k-1)})\leq
2^{2k}$.
\end{con}

Again in view of  Theorem \ref{homb}, one can rephrase Naserasr's
conjecture in terms of the helical graphs. The following
conjecture is Jaeger's modular orientation conjecture restricted
to planar graphs.

\begin{con}{\rm Jaeger's Conjecture \cite{JA}}\\
Every planar graph with girth  at least $4k$ has a homomorphism
to $C_{2k+1}$.
\end{con}

Considering Theorem \ref{ocy}, one can reformulate Jaeger's
conjecture as follows.

\begin{con}
Let $P$ be a planar graph with girth  at least $4k$. Then, we have
$\chi(S_2(P)^{(2k+1)})\leq 3$.
\end{con}
%%%%%%%%%%%%%%%%%%%%%%%%%%%%%%%%%%%%%%%%%%%%%%%%%%%%%%%%%%%%%%%%%%%%%%%%%%%%%%%%%%%%%%%%%%%%%
%%%%%%%%%%%%%%%%%%%%%%%%%%%%%%%%%%%%%%%%%%%%%%%%%%%%%%%%%%%%%%%%%%%%%%%%%%%%%%%%%%%%%%%%%%%%%
%%%%%%%%%%%%%%%%%%%%%%%%%%%%%%%%%%%%%%%%%%%%%%%%%%%%%%%%%%%%%%%%%%%%%%%%%%%%%%%%%%%%%%%%%%%%%
\ \\
{\bf Acknowledgement:} This paper was written during the
sabbatical leave of the author in Zurich University. He wishes to
thank J. Rosenthal for his hospitality. Also, the author wishes
to thank an anonymous referee, G. Simonyi, C. Tardif, G. Tardos,
B.R. Yahaghi, and X. Zhu who drew the author's attention to the
references \cite{BAST}, \cite{GYJE} and \cite{HETH} and for their
useful comments.
%%%%%%%%%%%%%%%%%%%%%%%%%%%%%%%%%%%%%%%%%%%%%%%%%%%%%%%%%%%%%%%%%%%%%%%%%%%%%%%%%%%%%%%%%%%%%


\begin{thebibliography}{11}
\bibitem{BAST}
S. Baum and M. Stiebitz, {\em Coloring of graphs without short
odd paths between vertices of the same color class}, manuscript
2005.

\bibitem{DAHA1}
A. Daneshgar and H. Hajiabolhassan, {\em Graph homomorphims
through random walks}, J. Graph Theory, {\bf 44} (2003), 15--38.

\bibitem{DAHA2}
A.~Daneshgar and H.~Hajiabolhassan, {\em Graph homomorphisms and
nodal domains}, Linear Algebra and Its Applications, {\bf 418}
(2006), 44--52.

\bibitem{DAHA3}
A. Daneshgar and H. Hajiabolhassan, {\em Circular colouring and
algebraic no-homomorphism theorems}, European J. Combinatorics,
{\bf 28} (2007), 1843--1853.

\bibitem{DAHA4}
A. Daneshgar and H. Hajiabolhassan, {\em Density and power graphs
in graph homomorphism problem}, Discrete Mathematics, to appear.

\bibitem{ERFU}
P. Erd\"{o}s, Z. F\"{u}redi, A. Hajnal, P. Komj´ath, V. R\"{o}dl,
and A´. Seress, {\em Coloring graphs with locally few colors},
Discrete Mathematics, {\bf 59} (1986), 21-–34.

\bibitem{GYJE}
A. ${\rm Gy\acute{a}rf\acute{a}s}$, T. Jensen, and M. Stiebitz ,
{\em On graphs with strongly independent color classes}, Journal
of Graph Theory, {\bf 46}(2004), 1--14.

\bibitem{HT}
G.~Hahn and C.~Tardif, {\em Graph homomorphisms: structure and
symmetry}, in Graph Symmetry, G.~Hahn and G.~Sabidussi, eds.,
no.~497 in NATO Adv. Sci. Inst. Ser. C Math. Phys. Sci., Kluwer,
Dordrecht, 1997, ~107--167.

\bibitem{HAZH}
H. Hajiabolhassan and X. Zhu, Circular chromatic number of Kneser
graphs, J. Combinatorial Theory Ser. B, {\bf 88} (2003), 299-–303.

\bibitem{HA}
H. Hatami, {\em Random cubic graphs are not homomorphic to the
cycle of size 7}, J. Combin. Theory Ser. B, {\bf 93} (2005),
319--325.

\bibitem{HATZH}
H. Hatami and X. Zhu, {\em The fractional chromatic number of
graphs of maximum degree at most 3}, manuscript 2006.


\bibitem{HETH}
C.C. Heckman and R. Thomas, {\em A new proof of the independence
ratio of triangle-free cubic graphs}, Discrete Mathematics, {\bf
233} 2001, 233--237.


\bibitem{HN}
P. Hell and J. Ne\v set\v ril, {\em Graphs and Homomorphisms},
Oxford Lecture Series in Mathematics and its Applications, {\bf
28}, Oxford University press, Oxford (2004).

\bibitem{JA}
F. Jaeger, {\em On circular flows in graphs in Finite and Infinite
Sets}, Colloquia Mathematica Societatis Janos Bolyai, edited by A.
Hajnal, L. Lovasz, and V.T. Sos., North-Holland, {\bf 37} (1981),
391--402.

\bibitem{jhs}
A. Johnson, F. C. Holroyd, and S. Stahl, {\em Multichromatic
numbers, star chromatic numbers and Kneser graphs}, J. Graph
Theory, {\bf 26} (1997), 137--145.

\bibitem{kne} M.~Kneser, {\em Aufgabe 300},
Jber. Deutsch. Math.-Verein., {\bf 58} (1955), 27.

\bibitem{KOPISI}
J. {\rm $K\ddot{o}rner$}, C. Pilotto, and G. Simonyi,
{\em Local chromatic number and Sperner capacity}, Journal of
Combinatorial Theory, Ser. B, {\bf 95} (2005), 101--117.


\bibitem{KNS}
A. Kostochka, J. Ne\v set\v ril, and P. Smolikova, {\em Colorings and
homomorphisms of degenerate and bounded degree graphs}, Discrete
Mathematics, {\bf 233} (2001), 257--266.

\bibitem{LILI}
K.W. Lih and D.F. Liu, {\em Circular chromatic numbers of
 some reduced Kneser graphs},  J. Graph Theory, {\bf 41} (2002),
62--68.

\bibitem{lov}
L.~Lov\'{a}sz, {\em Kneser's conjecture, chromatic number, and
homotopy}, J. Combinatorial Theory Ser. A, {\bf 25} (1978),
319--324.

\bibitem{MANANE}
T. Marshal, R. Naserasr, and J. Ne\v set\v ril, {\em On
homomorphism bounded classes of. graphs}, European J.
Combinatorics, {\bf 27} (2006), 592-–600.

\bibitem{ME}
F. Meunier, {\em A topological lower bound for the circular
chromatic number of Schrijver graphs},  J. Graph Theory, {\bf 49}
(2005), 257–-261.

\bibitem{RN}
R. Naserasr, {\em Homomorphisms and edge-colourings of planar
graphs},  J. Combinatorial Theory Ser. B, {\bf 97} (2007),
394–-400.

\bibitem{NES}
J. Ne\v set\v ril, {\em Aspects of structural combinatorics
(graph homomorphisms and their use)}, Taiwanese J. Math., {\bf 3}
(1999), 381-–423.

\bibitem{NEME1}
J. Ne\v set\v ril and P. Ossona De Mendez, {\em Colorings and Homomorphisms
of Minor Closed Classes}, Discrete and Computational Geometry:
The Goodman--Pollack Festschrift (ed. B. Aronov, S. Basu, J.
Pach, M. Sharir),  Springer Verlag, 2003, 651--664.


\bibitem{SCUL}
E.~R.~Scheinerman and D.~H.~Ullman, {\em Fractional graph
theory}, John Wiley \& Sons, Inc., New York, 1997.

\bibitem{sch}
A. Schrijver, {\em Vertex-critical subgraphs
of Kneser graphs},  Nieuw Arch. Wiskd., III. Ser. {\bf 26} (1978),
454--461.

\bibitem{SITA}
G. Simonyi and G. Tardos, {\em Local chromatic number, Ky Fan's
theorem, and circular colorings}, Combinatorica, {\bf 26} (2006),
587--626.

\bibitem{SITA2}
G. Simonyi and G. Tardos, {\em Colorful subgraphs in Kneser-like
graphs}, manuscipt 2006.

\bibitem{TA}
C.~Tardif, {\em Multiplicative graphs and semi-lattice
endomorphisms in the category of graphs}, J. Combinatorial Theory,
Ser. B, {\bf 95} (2005), 338--345.


\bibitem{VINCE}
A.~Vince, {\em Star chromatic number},
 J. Graph Theory, {\bf 12} (1988), 551--559.


\bibitem{WO}
I.~M.~Wanless and N.~C.~Wormald, {\em Regular graphs with no homomorphisms
onto cycles}, J. Combinatorial Theory, Ser. B, {\bf 82} (2001),
155--160.

\bibitem{ZH}
X.~Zhu, {\em Circular chromatic number:~a survey}, Discrete
Math., {\bf 229} (2001), 371--410.
\end{thebibliography}
\end{document}